\documentclass{endm}
\usepackage{endmmacro}
\usepackage{graphicx,float}
\usepackage{amsmath,amsfonts,amssymb}
\usepackage{algorithm}
\usepackage{algorithm}
\usepackage{float}
\usepackage{algpseudocode}


\newcommand{\C}{{\mathcal C}}
\newcommand{\T}{{\mathcal T}}
\newcommand{\Real}{{\mathbb R}}
\newcommand{\one}{{\mathbf 1}}
\newcommand{\F}{\mathcal{F}}

\begin{document}

\begin{verbatim}\end{verbatim}\vspace{2.5cm}

\begin{frontmatter}
\title{Packing functions  and  graphs with perfect closed neighbourhood matrices\thanksref{ALL}}

\author{M. Escalante$^{a,b}$ \; \; E. Hinrichsen$^{a}$ \; \; V. Leoni$^{a,b}$}

\address[a]{ 
Depto. de Matem\'atica, FCEIA. Universidad Nacional de Rosario, Argentina}

\address[b]{Consejo Nacional de Investigaciones Cient\'ificas y T\'ecnicas, Argentina}

\thanks[ALL]{Partially supported by grants PICT ANPCyT 0410, PID UNR 1ING631 and PID UNR ING542}

\thanks[coemail]{Emails:
{\{mariana,ericah,valeoni\}@fceia.unr.edu.ar} }

\begin{abstract} 
  In this work we consider a straightforward linear programming formulation of the recently introduced $\{k\}$-packing function problem in graphs, for each fixed value of the positive integer number $k$. We analyse a special relation between the  case $ k = 1$ and $ k \geq 2$ and give a sufficient condition  for optimality ---the perfection--- of the closed neighbourhood matrix $N[G]$ of the input graph $G$.
We begin a structural study of graphs satisfying this condition. In particular, we look for a characterization of graphs that have perfect  closed neighbourhood matrices which involves the property of being a clique-node matrix of a perfect graph. We present a necessary and sufficient condition for a graph to have a clique-node closed neighbourhood matrix. Finally, we study the perfection of the graph of maximal cliques associated to $N[G]$. 
\end{abstract}
\begin{keyword}
limited packings, polynomial instances, clique-node matrices, perfect graphs 
\end{keyword}
\end{frontmatter}

\section{Preliminaries and notation}\label{def}

\subsection{Graphs and binary matrices}

Throughout this work we consider finite simple connected graphs $G=(V,E)$, where $V$ is its node set and $E$ is its edge set. When we need to emphasize the relationship between the graph $G$ and its node and edge sets, we write $V(G)$ and $E(G)$.

The \emph{complementary graph} of $G$, denoted by $\overline{G}$, is such that $V(\overline{G})=V(G)$ and $E(\overline{G})=\{uv:  u,v\in V(G), uv \notin E(G)\}$.  

Given $G=(V,E)$ and $V'\subseteq  V$, we denote by $G[V']$ the subgraph of $G$ induced by the nodes in $V'$, i.e., the graph having node set $V'$ and edge set  $\{uv: uv\in E, \{u,v\}\subseteq  V'\}$. Then $G'$ is a \emph{ node induced subgraph} of $G$, $G'\subset G$, if $G'=G[V']$ for  some $V'\subseteq  V$. 

Given the graph $G=(V,E)$, the \emph{neighbourhood} of $v\in V$ is  $N(v)=\{u\in V: uv \in E \}$ and its \emph{closed neighbourhood} is $N[v] = N(v) \cup \{v\}$.  When we need to emphasize the graph $G$, we write $N_G(v)$ (respectively, $N_G[v]$) instead.
A vertex $v \in V$ is \emph{universal} if $N[v]=V$. 

For any positive integer number $n$, $K_n$ denotes the graphs with $n$ distinct nodes corresponding to a  \emph{complete graph}, i.e. a graph where for all distinct nodes $u,v \in V(K_n)$, $uv\in E(K_n)$.

A \emph{cycle}, denoted by $C_n$, is a graph with $n$ distinct nodes, say $1,\dots,n$, and edges $i(i+1)$ for $i=1,\dots,n-1$ together with the edge $1n$.  We say that $i$ and $i+1$, and $1$ and $n$  are consecutive nodes in the cycle. The \emph{length} of the cycle is the number  of its edges. An \emph{even} (\emph{odd}) cycle is a cycle of even (odd) length.

A \emph{chord} in a cycle is an edge connecting two non consecutive nodes in the cycle.

A \emph{hole} (resp. \emph{anti-hole}) is an induced cycle (the complementary graph of a cycle) in the graph (i.e. a hole has no chords).

A graph $G$ is \emph{chordal} if every cycle in $G$ of length at least four  have a chord. $G$ is \emph{strongly chordal} if it is chordal and also every cycle of even length at least 6 in $G$ has an odd chord, i.e., an edge that connects two nodes that are an odd distance apart from each other in the cycle.

A \emph{clique} in $G$ is a subset of pairwise adjacent nodes in $G$. 

Although it is not the original definition of perfection in graphs, we say that a  graph is \emph{perfect} if it has neither an odd hole nor an odd anti-hole as an induced subgraph, due to the Strong Perfect Graph Theorem  \cite{Chud05}.

In this work we make use of strong ties between all the above mentioned graph definitions from the polyhedral point of view, and then we need to present some $0,1$-matrices associated with graphs. 
   
Given $G=(V,E)$, its \emph{closed neighbourhood matrix} $N[G]=(a_{ij})$ is a $0,1$-matrix with rows and columns indexed in $V$ and such that  $a_{ij}=1$ when $i=j$ or $ij \in E(G)$ and $a_{ij}=0$ otherwise. Throughout this work we can use a different permutation of the node set $V$ in rows and columns, but in this case it is mentioned explicitly. 

Matrix $J$ denotes  the square matrix  whose entries are all 1's, and $I$ the identity matrix, both of appropriate sizes.

A $0,1$-matrix $M$ having $m$ rows and $n$ (non zero) columns  is \emph{perfect} 
if $P(M)=\{x\in [0,1]^n: Mx\leq \one\}$ is an integer polyhedron, i.e. all extreme points of $P(M)$  are $0,1$-vectors, where $\one$ denotes the vector of all 1's of appropriate size.

The \emph{clique-node} matrix of a graph $G$, denoted by $\mathcal{C}(G)$, is the $0,1$-matrix whose columns are indexed by the nodes of $G$ and whose rows are the incidence vectors of maximal cliques in $G$. 
Chv\'atal \cite{Chv75} proved that a $0,1$-matrix with no zero columns and no dominating rows is a perfect matrix if and only if it is the clique-node matrix of a perfect graph.

We say that a matrix is an \emph{extended clique-node} matrix of $G$ if it is a square $0,1$-matrix that contains $\mathcal{C}(G)$ as a row submatrix and all its remaining rows (if any) are  incidence vectors of cliques in $G$.
Note that the presence of dominating rows do not affect the integrality of the polyhedrom $P(M)$ and thus, Chv\'atal's result  mentioned  above can be restated as the following 

\begin{theorem}\cite{Chv75}\label{ch} A square $0,1$-matrix $M$ with no zero columns is a perfect matrix if and only if it is an extended clique-node matrix of a perfect graph.
\end{theorem}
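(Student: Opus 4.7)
The plan is to reduce Theorem \ref{ch} to the original Chvátal result cited just above it, exploiting the observation that rows whose supports are contained in the support of another row contribute redundant inequalities to $P(M)$ and can therefore be added or removed without changing the polyhedron.

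For the sufficiency direction, I would assume $M$ is an extended clique-node matrix of a perfect graph $G$. Every row of $M$ is the incidence vector of a clique of $G$, and every clique of $G$ is contained in some maximal clique whose incidence vector appears as a row of $\mathcal{C}(G)$. Consequently, any row of $M$ not already in $\mathcal{C}(G)$ is componentwise dominated by some row of $\mathcal{C}(G)$, so the inequality it contributes to $P(M)$ is implied by the corresponding inequality of $\mathcal{C}(G)$ (using $x \geq 0$). Hence $P(M) = P(\mathcal{C}(G))$, and by the original Chvátal theorem this polyhedron is integer, so $M$ is perfect.

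For the necessity direction, starting with a perfect square $M$ with no zero columns, I would iteratively delete dominated rows to obtain a submatrix $M'$ with no zero columns and no dominating rows. Since each such deletion preserves the polyhedron, $P(M') = P(M)$ remains integer, so $M'$ is perfect; the original Chvátal theorem then produces a perfect graph $G$ with $M' = \mathcal{C}(G)$. Each deleted row is dominated by some row of $\mathcal{C}(G)$, hence its support is contained in a maximal clique of $G$ and is therefore itself a clique of $G$. It follows that $M$ is $\mathcal{C}(G)$ augmented by incidence vectors of cliques of $G$, i.e., an extended clique-node matrix of $G$, as required.

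The delicate point will be establishing that the rows added to (or deleted from) $\mathcal{C}(G)$ are precisely incidence vectors of cliques of the underlying graph $G$, rather than arbitrary $0,1$-vectors dominated by rows of $\mathcal{C}(G)$. This uses in an essential way that rows of $\mathcal{C}(G)$ correspond to \emph{maximal} cliques, so being dominated by such a row forces the support to be a subclique of $G$.
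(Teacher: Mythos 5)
Your proposal is correct and follows essentially the same route as the paper, which justifies this restatement in a single remark: rows dominated componentwise by other rows yield redundant inequalities, so $P(M)$ coincides with $P(\mathcal{C}(G))$ and the original Chv\'atal theorem applies. Your added care about why the deleted or added rows are exactly incidence vectors of cliques (supports contained in maximal cliques) is the right detail to pin down, and it goes through as you describe.
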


The following characterization, derived from a similar result for clique-node matrices exposed in \cite{cornuejols}, is crucial for our analysis, since it yields a polynomial time algorithm to check if a $0,1$-matrix is an extended clique-node matrix. 

\begin{theorem}\cite{cornuejols}\label{cliquenodo}
Let $M$ be a $0,1$-matrix. The following statements are equivalent:

\begin{enumerate}
\item $M$ is an extended clique-node matrix,
\item if 
 
\begin{equation}\label{J-I}
\left(
\begin{array}{ccccccccc}
0&&1&&1&& 1& \dots &1\\
1&&0&&1&&1& \dots &1\\
1&&1&&0&&1& \dots &1
\end{array}
\right)
\end{equation}
is a submatrix of $M$, say, with columns $j_1,\dots,j_p$, where $p\geq 3$, then $M$ contains a row $i$ such that $m_{ij_k}=1$ for $k=1,\dots,p$.
\item If $J-I$ is a $p\times p$ submatrix of $M$, where $p\geq 3$, then $M$ contains a row $i$ such that $m_{ij}=1$ for every column $j$ of $J-I$.
\end{enumerate}
\end{theorem}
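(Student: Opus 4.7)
\medskip\noindent\textbf{Proof sketch.} My plan is to establish the cycle of implications $(1)\Rightarrow(2)\Rightarrow(3)\Rightarrow(1)$.

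For $(1)\Rightarrow(2)$, suppose $M$ is an extended clique-node matrix of some graph $H$, and that the pattern in (\ref{J-I}) appears on three rows of $M$ and columns $j_1,\dots,j_p$. I would first verify that $\{j_1,\dots,j_p\}$ is a clique of $H$: for any pair among these columns, at least one of the three displayed rows carries $1$s in both entries (this is read directly off the pattern), and since every row of $M$ is the incidence vector of a clique of $H$, the corresponding pair is an edge of $H$. Once this clique is identified, any maximal clique of $H$ containing it yields, through its incidence row in $M$, the row required by (2).

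For $(2)\Rightarrow(3)$, given a $p\times p$ submatrix equal to $J-I$ with rows $i_1,\dots,i_p$ and columns $j_1,\dots,j_p$, where $p\geq 3$, I would restrict attention to the first three rows $i_1,i_2,i_3$ together with all $p$ columns. A direct inspection shows that on $j_1,j_2,j_3$ these three rows display the $3\times 3$ block $J-I$, while every column $j_k$ with $k\geq 4$ is entirely $1$ on $i_1,i_2,i_3$ (because in a $J-I$ pattern the only $0$ in row $i_s$ is located in column $j_s$). This is precisely the pattern of (\ref{J-I}), so hypothesis (2) furnishes a row with $1$s in all of $j_1,\dots,j_p$.

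For $(3)\Rightarrow(1)$, I would construct a graph $G$ witnessing that $M$ is an extended clique-node matrix. Let $G$ have the columns of $M$ as node set and declare $uv$ to be an edge iff some row of $M$ has $1$s in both columns $u$ and $v$; the support of every row is then, by construction, a clique of $G$. The heart of the argument is the following claim, which I would prove by induction on the clique size: \emph{every clique $C$ of $G$ is contained in the support of some row of $M$}. The bases $|C|=1,2$ follow respectively from the hypothesis that the columns of $M$ are nonzero and from the definition of the edges of $G$. For $|C|=q\geq 3$, write $C=\{c_1,\dots,c_q\}$; by the inductive hypothesis each $C\setminus\{c_k\}$ sits inside the support of some row $r_k$. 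If some $r_k$ already contains $c_k$ we are done; otherwise the $q\times q$ submatrix on rows $r_1,\dots,r_q$ and columns $c_1,\dots,c_q$ is precisely $J-I$, and hypothesis (3) produces a row covering all of $C$.

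With the claim in hand, if $K$ is any maximal clique of $G$ then $K$ is contained in the support of some row, which is itself a clique of $G$ containing $K$, and maximality forces the two to coincide. Hence every maximal clique of $G$ appears as a row of $M$, while every row of $M$ is a clique of $G$; that is exactly the definition of an extended clique-node matrix. The delicate point, which I expect to be the main obstacle, is carrying out the induction cleanly: one has to ensure that the rows $r_1,\dots,r_q$ picked from the inductive hypothesis really do fit together into an honest $J-I$ pattern on $c_1,\dots,c_q$, so that hypothesis (3) applies and the inductive step closes.
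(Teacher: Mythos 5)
The paper does not actually prove Theorem~\ref{cliquenodo}; it imports the result from \cite{cornuejols}, so there is no in-paper argument to compare against. Your cycle $(1)\Rightarrow(2)\Rightarrow(3)\Rightarrow(1)$ is correct and is essentially the standard proof. In $(1)\Rightarrow(2)$, every pair among $j_1,\dots,j_p$ is indeed covered by one of the three displayed rows of \eqref{J-I}, so those columns form a clique of the underlying graph and any maximal clique containing it supplies the required row. The implication $(2)\Rightarrow(3)$ is a correct specialization, since in a $J-I$ block the only zero in row $i_s$ sits in column $j_s$. The ``delicate point'' you flag in $(3)\Rightarrow(1)$ does close: the rows $r_1,\dots,r_q$ produced by the inductive hypothesis are automatically pairwise distinct (if $r_k=r_l$ with $k\neq l$, then $r_k$ would simultaneously contain and omit $c_l$), and the entry of $r_k$ in column $c_j$ equals $1$ exactly when $j\neq k$, so the selected $q\times q$ submatrix is literally $J-I$ and hypothesis (3) applies; maximality then forces every maximal clique of $G_Q(M)$ to appear as a row. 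Two small caveats, both traceable to the paper's loose statement rather than to your argument: your base case $|C|=1$ uses that $M$ has no zero columns, which the theorem as stated does not guarantee (a matrix with a zero column satisfies (2) and (3) vacuously in that column yet cannot be an extended clique-node matrix), and the paper's definition of extended clique-node matrix additionally requires $M$ to be square, which likewise does not follow from (3). You should state the no-zero-column hypothesis explicitly --- it appears in Theorem~\ref{ch} and in Chv\'atal's original setting --- and either assume or discard squareness.
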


Given a $0,1$-matrix $M$, the \emph{clique graph} of $M$, $G_Q(M)$, is the  graph having one node for each column of $M$ and $ij$ being an edge  when there is a row having value 1 in the $i$-th and $j$-th columns of $M$.

Under this definition, the two last conditions in Theorem \ref{cliquenodo} say that the columns in $M$ indexed in $\{j_1,\dots,j_p\}$ form  a clique in the graph $G_Q(M)$. A necessary and sufficient condition for $M$ to be an extended  clique-node matrix is that the incidence vector of every of these  cliques are also rows of $M$. 

According to Chv\'atal's result, the matrix $M$ is perfect if and only if $M$ is the clique-node matrix of $G_Q(M)$ and $G_Q(M)$ is a perfect graph.

When $M=N[G]$ for some graph $G$, we simply write $G_Q$ instead of $G_Q(M)$, and refer to it as the \emph{clique graph associated with $G$}.

\subsection{Limited packing functions}
 
For a graph $G=(V,E)$ and a positive integer $k$, a function $f : V  \mapsto \mathbb{Z}_+$ is a \emph{$\{k\}$-packing function} of $G$ \cite{LNCS14HL}, if for each node $v\in V$, 
\[
f(N[v])= \sum_{w\in N[v]} f(w) \leq k.
\]
The maximum possible value of $f(V)= \sum_{v\in V} f(v)$ over all $\{k\}$-packing functions $f$ of $G$ is denoted as $L_{\{k\}}(G)$. 

These concepts generalize the notion of $k$-limited packing introduced  by Gallant et al. \cite{GGHR1}. A \emph{$k$-limited packing} in $G$ is a function $f:V\rightarrow\{0,1\}$ such that for each node $v\in V$, 
\[f(N[v])\leq k.\]
Again, the maximum possible value of $f(v)$ over all $k$-limited packings $f$ in $G$ is denoted by $L_k(G)$.  

From their definitions, it holds $L_1(G)=L_{\{1\}}(G)$ and $L_k(G) \leq L_{\{k\}}(G)$ for every graph $G$. Nevertheless, this last inequality may or may not be  strict. In Figure \ref{kpacking} there is  a graph $G$, an optimal $\{3\}$-packing function and a $3$-limited packing in $G$ whose optimal values do not satisfy the equality. In the following two figures we have represented the value of the function next to each of the nodes.

\begin{figure}[ht]
\centering
\includegraphics[scale=1.1]{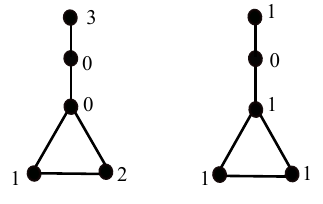}
\caption{A  graph $G$ with  $L_{\{3\}}(G)=6$ and $L_3(G)=4$.}\label{kpacking}
\end{figure}

The optimization problem associated with this concept is the following:
\bigskip

\noindent
{\textbf{$\{k\}$-PACKING FUNCTION}} {\scshape ($\{k\}$PF}, for fixed positive  integer $k$):
 Given a graph $G$, find  $L_{\{k\}}(G)$.

\bigskip

By using simple combinatorial arguments, in \cite{DHLITOR2017} it is proved  that $L_{\{k\}}(G)\geq k L_1(G)$, for every integer number $k$ and every graph $G$. Nevertheless, note that this inequality may be strict as the following example shows.

\begin{example}\label{graphS3}
Let $G$ be the graph 
 $S_3$ depicted in Figure \ref{otro}.  
It is clear that $L_1(S_3)=1$ but $L_{\{3\}}(S_3)=4$, as shown in the figure.  
\end{example}

\begin{figure}[ht]
\centering
\includegraphics[scale=0.8]{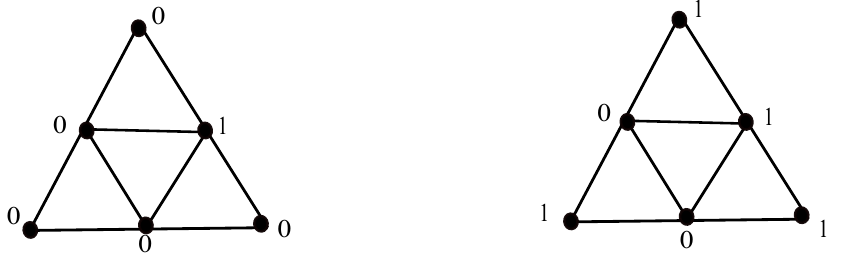} 
\caption{Optimal $\{1\}$- and $\{3\}$-packing functions of $S_3$, respectively.}\label{otro}
\end{figure}

Besides, it is known that $\{k\}$PF is linear time solvable  in strongly chordal graphs \cite{LNCS14HL} and it is NP-complete for chordal graphs \cite{DHLITOR2017}. 

We present a natural Integer Linear Programming formulation of $\{k\}$PF for fixed $k$. We give a sufficient condition for optimality, the perfection of the closed neighbourhood matrix of the input graph. 

A different sufficient condition involving dominating sets was already given in  \cite{DHLITOR2017}. 
The step forward with this contribution is that, knowing that checking matrix perfection can be performed in polynomial time for any input graph (see Theorem \ref{cliquenodo}), we are allowed to find new polynomial instances of problem $\{k\}$PF.   In the last section, we begin a structural study of graphs satisfying the new sufficient condition and present a characterization of graphs that have closed neighbourhood matrices that are also clique-node matrices and necessary conditions for the associated clique graphs to be perfect.

\section{A sufficient condition for optimality of the $\{k\}$-Packing Function Problem}

We now  consider an integer programming formulation for $\{k\}$PF. 

Given $G=(V,E)$ with $|V|=n$, let $x_i=f(i)$ for $i=1, \dots, n$. 
For $u,v \in \Real^n$,  $u \cdot v=\sum_{i=1}^n  u_i v_i$, and the associated matrix-vector product.

For fixed integer positive number $k$, $\{k\}$PF can be formulated as follows:

\[ L_{\{k\}}(G)= \max \{ \one \cdot x:  N[G]\cdot x \leq k \one,\quad x\in \{0, 1, \dots, k\}^n\}.
\]

Let $L^R_1(G)$ be the optimal value of the linear relaxation of this problem for $k=1$, i.e. allowing $x$  to take any value in $[0,1]^n$. Then, we have:

\begin{theorem}\label{suf2}
For each fixed positive integer $k$ and graph $G$, if $N[G]$ is perfect then $L_{\{k\}}(G)$ can be found in polynomial time and $L_{\{k\}}(G)=k  L_1(G)$. 
\end{theorem}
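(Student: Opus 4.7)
My plan is to reduce the integer program defining $L_{\{k\}}(G)$ to the LP relaxation of the $k=1$ case by a simple scaling, and then use perfection of $N[G]$ to get integrality ``for free.''

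First I would set up the continuous relaxation
\[
L_{\{k\}}^R(G) = \max\{\mathbf{1}\cdot x : N[G]\cdot x \leq k\mathbf{1},\ 0\leq x\leq k\mathbf{1}\}.
\]
Observe that the box constraint $x\leq k\mathbf{1}$ is redundant: for $x\geq 0$, the $i$-th row of $N[G]\cdot x\leq k\mathbf{1}$ already forces $x_i\leq k$ because $N[G]$ has $1$'s on the diagonal and nonnegative entries elsewhere. So setting $y=x/k$ gives a bijection between the feasible region above and $P(N[G])=\{y\in[0,1]^n : N[G]\cdot y\leq \mathbf{1}\}$, with $\mathbf{1}\cdot x=k(\mathbf{1}\cdot y)$. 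Hence $L_{\{k\}}^R(G)=k\,L_1^R(G)$.

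Next, I use the hypothesis that $N[G]$ is perfect, which by definition means $P(N[G])$ is an integer polyhedron. Therefore the LP $\max\{\mathbf{1}\cdot y : y\in P(N[G])\}$ admits an optimal $\{0,1\}$-vertex $y^*$. Such a $y^*$ is precisely the characteristic vector of a $1$-limited packing (equivalently a $\{1\}$-packing function since $L_1(G)=L_{\{1\}}(G)$), so $L_1^R(G)=L_1(G)$. Combining with the scaling step, $L_{\{k\}}^R(G)=k\,L_1(G)$.

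For the two inequalities giving the main identity: the upper bound $L_{\{k\}}(G)\leq L_{\{k\}}^R(G)=k\,L_1(G)$ is immediate because every feasible integer $x$ is feasible for the relaxation. For the lower bound, given the optimal integer vertex $y^*$ above, the vector $x^*:=k\,y^*\in\{0,k\}^n\subseteq\{0,1,\dots,k\}^n$ satisfies $N[G]\cdot x^*\leq k\mathbf{1}$ and achieves value $\mathbf{1}\cdot x^* = k\,L_1(G)$. Hence $L_{\{k\}}(G)=k\,L_1(G)$.

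Finally, for polynomiality: one solves the LP $\max\{\mathbf{1}\cdot y : N[G]\cdot y\leq\mathbf{1},\ 0\leq y\leq \mathbf{1}\}$ in polynomial time by any polynomial-time LP algorithm, extracts a vertex optimum (which is automatically $\{0,1\}$ by perfection), and outputs $x^*=k\,y^*$. The main subtleties to be careful about are (i) verifying the redundancy of the upper box constraint so that the scaling really identifies the two feasible regions, and (ii) using the fact that an optimal \emph{vertex} of the LP can be produced in polynomial time (not just the optimal value), so that an integer optimizer is actually exhibited.
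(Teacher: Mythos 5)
Your proof is correct and follows essentially the same route as the paper: scale the $k=1$ LP relaxation by $k$ to identify it with the relaxation of $\{k\}$PF, then use perfection of $N[G]$ to conclude $L_1^R(G)=L_1(G)$. The only (harmless) difference is that you derive the lower bound $L_{\{k\}}(G)\geq k\,L_1(G)$ directly by scaling the integral optimal vertex $y^*$, whereas the paper cites this inequality from earlier work; your version makes the argument self-contained and also supplies the explicit integer optimizer needed for the polynomial-time claim.
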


\begin{proof}
It is clear that, if $x$ is a solution of the linear relaxation of the problem above for $k=1$, then $k N[G]\cdot x \leq k  \one$ and thus $kx$ is a solution of the linear relaxation of $\{k\}$PF, and conversely. Consequently, 
 \[L_{\{k\}}(G)  \leq k  \max \{\one \cdot y : N[G]\cdot y \leq \one, \; y\in [0,1]^n\}= k L^R_1(G).\] 

Notice that if $N[G]$ is perfect, then $L_1(G) =L_1^R(G)$  and, therefore, $L_{\{k\}}(G)\leq k L_1(G)$.  
Recall that $L_{\{k\}}(G)\geq k L_1(G)$, for each $k$ and every graph $G$ \cite{DHLITOR2017}.

In all, we have the desired equality.
\end{proof}

Notice that the cycle $C_4$ is an example that shows that the sufficient condition in Theorem \ref{suf2} is not necessary. In fact, 
  $L_{\{k\}}(C_4)=k$ for every $k$ that is not a mutiple of three and $L_1(C_4)=1, $ but $N[C_4]$ is not perfect since it is not a clique-node matrix.

In the next section we focus on the family of graphs for which their closed  neighbourhood matrices are perfect.

\section{Perfect closed neighbourhood matrices}\label{carac}

In this section we begin a structural study of graphs satisfying the sufficient condition in Theorem \ref{suf2}.

Let  $\F$ be the family of graphs whose closed neighbourhood matrices are perfect. 
There are some trivial graphs in the family $\F$, for instance complete graphs. 
Let us now introduce some conditions for a graph $G$ to belong to the family $\F$. 

Recall that, given $G$, we consider the clique graph associated with it, denoted by $G_Q$. Then, a graph $G$ belongs to $\F$ if and only if $N[G]$ is an extended clique-node matrix and $G_Q$ is a perfect graph.

Observe that the characterization of extended clique-node matrices shows that if $G=(V,E) \in \F$, for every maximal clique in $G_Q$ there must be  a node in $V$ adjacent to all the nodes in this clique.

Therefore, 

\begin{lemma}
If a graph $G=(V,E)$ has a universal node then it belongs to $\F$. 
\end{lemma}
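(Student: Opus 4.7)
The plan is to invoke the characterization stated immediately before the lemma: a graph $G$ belongs to $\F$ exactly when $N[G]$ is an extended clique-node matrix and the associated clique graph $G_Q$ is perfect. Both properties should be almost immediate once a universal node is in play, so the proof reduces to verifying each condition in turn.

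First I would single out a universal node $v \in V$ and note that, because $N[v]=V$, the row of $N[G]$ indexed by $v$ is the all-ones vector. Every column of $N[G]$ also contains a $1$ on the diagonal, so $N[G]$ has no zero columns and Theorem \ref{ch} applies. Next I would check condition (3) of Theorem \ref{cliquenodo}: given any $p\times p$ submatrix equal to $J-I$ with $p\ge 3$, the row indexed by $v$ has entry $1$ in every column, in particular in every column of that submatrix. Hence $N[G]$ is an extended clique-node matrix.

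Then I would describe $G_Q$ explicitly. For any pair of distinct columns $i,j$, the row of $v$ has $1$'s in both positions, so $ij$ is an edge of $G_Q$; therefore $G_Q=K_{|V|}$. A complete graph contains neither odd holes nor odd anti-holes of length at least five, so it is perfect. Combining the two facts via Theorem \ref{ch} yields that $N[G]$ is a perfect matrix, i.e. $G\in\F$.

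There is no real obstacle here: the universal vertex simultaneously supplies the ``covering row'' demanded by Theorem \ref{cliquenodo} and forces $G_Q$ to be complete, so both required properties collapse to essentially the same observation. The only thing worth being careful about is that we are talking about the row indexed by $v$ in $N[G]$ (rather than in $\mathcal{C}(G_Q)$), and that the hypothesis of Theorem \ref{ch} concerning the absence of zero columns is indeed met.
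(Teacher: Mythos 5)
Your argument is correct and matches the paper's own proof: both identify the all-ones row coming from the universal node, use it to satisfy the covering condition of Theorem \ref{cliquenodo}, and observe that the same row forces $G_Q$ to be complete and hence perfect. You simply spell out the details (the no-zero-columns hypothesis, condition (3)) that the paper leaves implicit.
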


\begin{proof}
Let $N[G]=(a_{ij})$, for $i,j\in V$ and $v$ a universal node in $G$. Then, $a_{vj}=1$ for all $j\in V$. 

Theorem \ref{cliquenodo} implies that  $N[G]$ is a clique-node matrix and  the graph $G_Q$ is a complete graph, thus perfect. 
\end{proof}

 A \emph{wheel} graph $W_n$ on $n$ nodes  is a cycle on $n-1$ nodes together with a universal node outside the cycle. 
From the previous result, wheel graphs are also in $\F$. 
However, a wheel of at least five nodes is not even chordal. Therefore, there are some trivial non chordal graphs in $\F$.

A particular subset of chordal graphs is the set of \emph{strongly chordal} graphs. A  well-known characterization of them is given by the total balancedness of their closed neighbourhood matrices \cite{farber}. A $0,1$-matrix is \emph{totally balanced} if it does not contain as a submatrix the (node-edge) incidence matrix of a cycle of length at least three.
Due to Farber  \cite{farber}, a graph is totally balanced if and only if its closed neighbourhood matrix is totally balanced.  
The fact that strongly chordal graphs are in $\F$ follows from the fact that totally balanced matrices are perfect \cite{fulkerson}. 

Clearly, the family of chordal graphs and $\F$ do not coincide but intersect each other in, at least, strongly chordal graphs. 

The family of web graphs generalize other families of graphs having strong symmetry, like complete graphs, cycles and the complementary graphs of cycles. Therefore it is important to determine which of them belong to $\F$.
 
Given positive integer numbers $n$ and $k$, the \emph{web} graph $W_n^{k}$ has $n$ nodes, say  $1, \dots, n$, and $ij$ is an edge if $i$ and $j$ differ by at  most $k$ (mod $n$) and $ i \neq j$. The complementary graph of a web is an \emph{antiweb}. Antiwebs have also a circular symmetry of their maximum cliques and stables sets. If $n\leq 2k+1$ the web $W_n^k$ is the complete graph of $n$ nodes. Clearly, for $k\geq 2$ the graph  $W_{2k+1}^1$ is an odd cycle and $W_{2k+1}^{k-1}$ is the complementary graph of an odd cycle.

A \emph{circulant matrix} $C_{n}^k$, for $ k\in\{1,\dots, n-1\}$ is the $0,1$-matrix having columns indexed in $\{1,\dots,n\}$ and rows given by all the incidence vectors of $\{i+1,\dots,i+k\}$ for $i\in\{1,\dots,n\}$, addition mod $n$. 

\begin{remark}\label{circulante}
There is a strong relationship between circulant matrices and web graphs. In fact, $C_n^{k+1}$ is a clique-node matrix  if and only if $n\geq 3k+1$. Moreover, in this case, $C_n^{k+1}=\C(W_n^k)$. 
This follows from the fact that, when $n\geq 3k+1$ the maximal cliques in $W_n^k$ are defined by $k+1$ consecutive nodes in the web and when $n\leq 3k$ we have some more maximal cliques, which are defined by appropriate non consecutive nodes, as in the case of $W_7^3$. 
\end{remark}

Also, perfect  web graphs are completely identified. 

\begin{remark}\label{perfect_webs}
The authors in \cite{paper_su} completely characterized the disjunctive rank of the clique relaxation of the stable set polytope, i.e., the minimum number of nodes that must be deleted from a web in order to become a perfect graph. As a consequence, they proved that a web graph  $W_n^k$ is perfect if and only if it is a complete graph or $n=2(k+1)$.
\end{remark}

Using the results in these two remarks we can  prove the following

\begin{proposition}\label{webs}
Complete graphs are the only web graphs in $\F$.
\end{proposition}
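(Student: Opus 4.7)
The plan is to show that every web $W_n^k$ with $n\geq 2k+2$ (i.e.\ every non-complete web) fails to belong to $\F$; combined with the easy observation that complete graphs lie in $\F$ (their closed neighbourhood matrix is $J$, which is an extended clique--node matrix of the perfect graph $K_n$), this yields the proposition. The starting point is the identification, up to a relabeling of rows, of $N[W_n^k]$ with the circulant matrix $C_n^{2k+1}$: the closed neighbourhood $N[i]=\{i-k,\dots,i+k\}\pmod n$ is a block of $2k+1$ cyclically consecutive nodes. From this one computes $G_Q := G_Q(N[W_n^k])$ directly: two nodes appear together in some closed neighbourhood iff their cyclic distance is at most $2k$, so $G_Q = W_n^{2k}$, which coincides with $K_n$ whenever $n\leq 4k+1$.

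The argument then proceeds by cases on $n$. If $n\geq 4k+3$, the graph $W_n^{2k}$ is neither complete nor equal to $W_{2(2k+1)}^{2k}$, so Remark~\ref{perfect_webs} tells us that $G_Q$ is not perfect. Since the graph associated with an extended clique--node matrix $M$ is uniquely determined as $G_Q(M)$, this prevents $N[W_n^k]$ from being an extended clique--node matrix of any perfect graph, and Theorem~\ref{ch} gives non-perfection. If $2k+2\leq n\leq 4k+1$, then $G_Q=K_n$ is perfect, and the obstruction comes from the matrix: the unique maximal clique of $K_n$ is the whole vertex set, so any extended clique--node matrix of $K_n$ must contain an all-ones row, but every row of $N[W_n^k]$ has exactly $2k+1<n$ ones.

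The remaining case $n=4k+2$ is the most delicate, since $G_Q = W_{4k+2}^{2k}$ is then perfect by Remark~\ref{perfect_webs}. Here I would exhibit a maximal clique of $G_Q$ that is not a row of $N[W_n^k]$. Take the ``zigzag'' $Q=\{1,3,5,\dots,4k+1\}$ of size $2k+1$; any two odd-indexed nodes are at cyclic distance at most $2k$, so $Q$ is a clique in $W_{4k+2}^{2k}$, and every even-indexed vertex $2j$ has its antipode $2j+2k+1\pmod{4k+2}$ lying in $Q$ at cyclic distance exactly $2k+1$, so no even vertex can be added, i.e.\ $Q$ is maximal. Because $Q$ is not a set of consecutive integers modulo $n$, its incidence vector is not a row of $N[W_n^k]$; hence $\C(G_Q)$ is not a row submatrix of $N[W_n^k]$ and Theorem~\ref{ch} again prevents perfection.

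I expect the verification for $n=4k+2$ to be the main obstacle, since the other two ranges follow automatically from Remark~\ref{perfect_webs} and from a counting argument on rows. The essential content lies in identifying the correct non-consecutive maximal clique and confirming its maximality through the antipodal pairing available in $W_{4k+2}^{2k}$.
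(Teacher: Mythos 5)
Your proof is correct and follows essentially the same route as the paper: identify $N[W_n^k]$ with the circulant $C_n^{2k+1}$, recognize the associated clique graph as $W_n^{2k}$, and invoke the characterization of perfect webs. The only real difference is organizational: where the paper cites Remark~\ref{circulante} to rule out $n<6k+1$ wholesale, you split into ranges of $n$ and, in the critical case $n=4k+2$, exhibit the non-consecutive maximal clique $\{1,3,\dots,4k+1\}$ explicitly, which in effect re-proves the portion of that remark that the paper relies on.
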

\begin{proof}
We have already mentioned that complete graphs belong to $\F$.

Let $W_n^k$ be a web graph with $n>2k+1$ and $k\geq 1$.
It is clear that $N[W_n^k]=C_n^{2k+1}$. After Remark \ref{circulante}, $C_n^{2k+1}=\C(W_n^{2k})$ if and only if $n\geq 6k+1$.
In order $W_n^{k}$ to belong to $\F$, $N[W_n^{k}]$ has to be the clique-node matrix of a perfect graph. Remark \ref{perfect_webs} implies that $n=4k+2$, which contradicts the fact that $n\geq 6k+1$.
\end{proof}

Due to their importance on the sequel, we present some small examples of graphs whose  closed neighbourhood matrices are not extended clique-node matrices.

\begin{example}\label{malos1}
The closed neighbourhood matrices of the chordless cycles $C_n=W^1_n$, for $n\in \{4,5,6\}$ are not clique-node matrices.
This holds since $N[W^1_n]=C_n^3$ for $n\in \{4,5,6\}$, and they are not clique-node matrices after Remark \ref{circulante}. 
\end{example}

\begin{example}\label{malos11}
Consider again graph $S_3$ depicted in Figure \ref{minimal}. From Theorem \ref{suf2} and taking into account Example \ref{graphS3}, it is clear that $N[G]$ is not a perfect matrix. 

Also, this can be seen regarding at the neighbourhood matrix, since there are three  nodes of degree 4 in $G$ defining a structure in $N[G]$ as in \eqref{J-I} for $p=6$ but $N[S_3]$ has no row of all ones and thus it is not an extended clique-node matrix. 
\end{example}

\begin{figure}[h]
\centering
\includegraphics[scale=0.8]{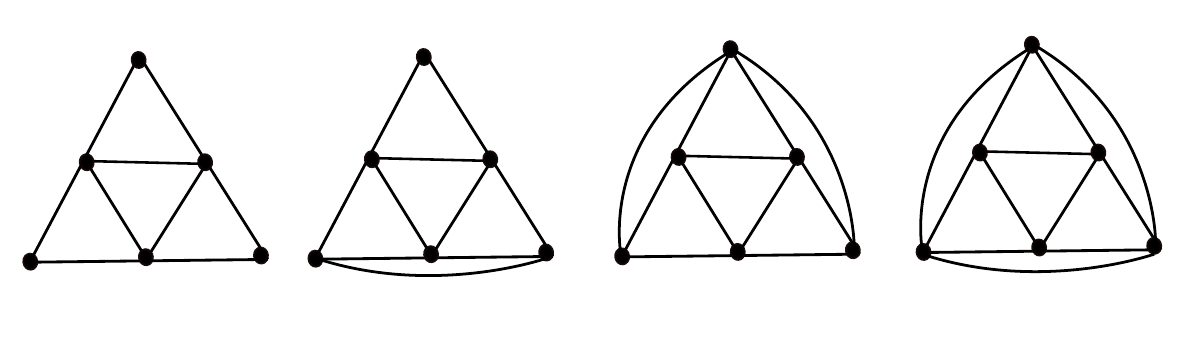}
\caption{Graphs $S_3$ (known also as 3-sun or Haj\'os graph), 1-pyramid, 2-pyramid and 3-pyramid.}\label{minimal}
\end{figure}

If $\T=\{C_4, C_5, C_6, S_3\}$,  we next show that a necessary and sufficient condition for $N[G]$ to be an extended clique-node matrix is that all the induced subgraphs of $G$ in $\T$ (if any) have a universal node in $G$.

\begin{theorem}\label{main}
Let $G=(V,E)$ be a graph.  $N[G]$ is an extended clique-node matrix if and only if for every $G'\subset G$ and $G'\in \T$, there exists $v\in V\setminus V(G')$ such that 
$N(v)\supset V(G')$.
\end{theorem}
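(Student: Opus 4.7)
The proof will appeal to Theorem \ref{cliquenodo} throughout, which characterises extended clique-node matrices by the presence of a covering all-ones row for every $(J-I)$-type submatrix. Each direction of the equivalence will be established by matching the four graphs in $\T$ with minimal uncoverable configurations of $N[G]$.

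In the necessity direction I plan to assume $N[G]$ is extended clique-node and, for each induced $G'\subset G$ with $G'\in\T$, exhibit a submatrix of $N[G]$ of form (\ref{J-I}) whose column indices contain $V(G')$. The covering row provided by Theorem \ref{cliquenodo} then corresponds to a vertex $v$ with $N_G[v]\supseteq V(G')$; since no vertex of any graph in $\T$ is adjacent within that graph to all the remaining vertices, and since $G'$ is induced in $G$, such a $v$ must lie in $V\setminus V(G')$, giving $N(v)\supseteq V(G')$. For $G'=C_4$ the whole $4\times 4$ matrix $N[C_4]$ is a $J-I$ (its four zeros form a perfect matching). For $G'=S_3$ the rows indexed by the triangle vertices and the columns ordered with the three outer vertices first produce the $3\times 6$ submatrix of form (\ref{J-I}) already displayed in Example \ref{malos11}. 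For $G'=C_5$ or $G'=C_6$ no submatrix of $N[G']$ alone takes the shape (\ref{J-I}) with $p=|V(G')|$, because each row of $N[G']$ has too many zeros. In these two cases I will argue by contradiction from
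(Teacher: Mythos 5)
Your proposal breaks off mid-sentence and, as written, establishes neither direction of the equivalence in full. The part you do carry out --- the cases $G'=C_4$ and $G'=S_3$ of the ``only if'' direction --- is correct and essentially identical to the paper's: $N[C_4]$ is a $4\times 4$ copy of $J-I$ up to column permutation, the three degree-4 vertices of $S_3$ give the $3\times 6$ configuration of form (\ref{J-I}) from Example \ref{malos11}, Theorem \ref{cliquenodo} then supplies a covering row, and since no vertex of $C_4$ or $S_3$ is adjacent inside that graph to all the others, the covering vertex lies outside $V(G')$. But for $C_5$ and $C_6$ you correctly note that $N[G']$ alone contains no $(J-I)$-shaped submatrix on all of $V(G')$ (each row has two zeros among those columns), announce an argument by contradiction, and stop. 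The paper resolves these cases by a different mechanism: it passes to the clique graph, observing that $G'_Q=K_5$ for $C_5$ and $G'_Q=W_6^2$ for $C_6$, so that $V(C_5)$ (respectively the triangles $\{1,3,5\}$ and $\{2,4,6\}$ of $C_6$) are cliques of $G_Q$ whose incidence vectors must extend to rows of $N[G]$; the $C_6$ case needs the further step that the two witnesses $v$ and $w$ for the two triangles may differ, in which case one uses the induced $C_4$'s they create and the already-settled $C_4$ case to produce a single vertex dominating all of $V(C_6)$. None of this appears in your text, and it is not clear that a contradiction argument based purely on $(J-I)$ submatrices of $N[G']$ can succeed, precisely because of the obstruction you yourself identified.

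The larger omission is the ``if'' direction: assuming every induced member of $\T$ admits an external vertex $v$ with $N(v)\supset V(G')$, you must show $N[G]$ is an extended clique-node matrix. The paper proves the contrapositive: a configuration of form (\ref{J-I}) with no covering row contains a $3\times 3$ block $J-I$, and the induced subgraph $G^*$ spanned by the (at most six) vertices involved is analysed exhaustively via the $\lambda_i$ tables of Figure \ref{tablitas}, showing that $G^*$ is, or contains, an induced $C_4$, $C_5$, $C_6$ or $S_3$ with no external dominating vertex. This is the substantive half of the theorem --- it is what justifies that the finite list $\T=\{C_4,C_5,C_6,S_3\}$ suffices --- and your proposal does not address it at all. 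As it stands the proof attempt is an incomplete sketch of the easier direction only.
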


\begin{proof}

Suppose $N[G]$ is an extended clique-node matrix. 
From examples \ref{malos1} and \ref{malos11}, it is clear that $G$ does not belong to $\T$. 

Let $G'\subset G$ with  $G'\in \T$ and consider its neighbourhood matrix  $N[G']$:

\begin{itemize}
\item If $G'=C_4$ then $N[G']=J-I$.  Theorem \ref{cliquenodo} with $p=4$ implies that  there must be a row of $N[G]$ corresponding to nodes not in $V(G')$ with 1's in the columns associated with  the nodes in $G'$, say $j_1, j_2, j_3, j_4$.  
This means that there is a node $v\in V\setminus V(G')$  adjacent to every node in $V(G')$ and the result holds.

\item If $G'=C_5$ then $N[G']=C^3_5$  which is not an extended clique-node matrix after Example \ref{malos1}, but that yields the graph $G'_Q=K_5$. Since $N[G]$ is an extended clique-node matrix, there must be $v\in V\setminus V(G')$ such that $\{v\}\cup V(G')$ is a  clique in $G_Q$, and moreover, its incidence vector is a row of $N[G]$.

\item Similarly, if $G'=C_6$, then  $N[G']=C_6^3$ which is not an extended clique-node matrix after Example \ref{malos1}, but that yields the graph $G'_Q=W^2_6$. In fact there are two maximal cliques in $W_6^2$, say $Q_1=\{1,3,5\}$ and $Q_2=\{2,4,6\}$, whose incidence vectors are not rows of $C_6^3$.  

Since $N[G]$ is an extended clique-node matrix, there must be $v,w\in V\setminus V(G')$ such that $S_1=\{v,1,2,3\}$ and  $S_2=\{w,2,4,6\}$ are cliques in $G_Q$, and their incidence vectors are rows of $N[G]$. If $v=w$, there is a node adjacent to every node in $G'$ and we are done.
If not, each of the nodes $v$ and $w$ generates three cycles of length 4 in $G'$. By the first case, each of them must have a universal node. This implies, recursively applying the previous case, that there must exist another node in $V$ adjacent to all the nodes in $G'$.  

\item Finally, if $G'=S_3$, the graph $G'_Q$ is a complete graph, and then there must be a node in $V\setminus V(G')$  adjacent to all the nodes in $G'$.
\end{itemize}

Now suppose $N[G]$ is not an extended clique-node matrix. According to Theorem \ref{cliquenodo}, $N[G]$
contains \[M=
\left(
\begin{array}{ccccccccc}
0&&1&&1&& 1& \dots &1\\
1&&0&&1&&1& \dots &1\\
1&&1&&0&&1& \dots &1
\end{array}
\right)
\]
as a submatrix, say, with columns $j_1,\dots,j_p$, for some $p\geq
3$, but it does not contain a row $i$ with $m_{ij_k}=1$ for $k=1,\dots,p$.

It is enough to prove that the submatrix $J-I$ of $M$ of order 3  
forces $G$ to have at least one induced subgraph  $G'$ of the family $\T$ and, by the assumption that $N[G]$ does not contain a row $i$ with $m_{ij_k}=1$ for $k=1,\dots,3$, we conclude that there is not a vertex $v\in V\setminus V(G')$ such that $N(v)\supset V(G')$. 

Clearly, the  submatrix $J-I$ of order 3 itself is not the neighbourhood matrix of any induced subgraph of $G$.  Then, $J-I$ is a submatrix of an induced subgraph of $G$, say $G^*$, with at least 4 nodes. Let us prove that  $G^*$ has at most 6 nodes.
Let us analyse the possible cases:
\begin{itemize}
\item If $|V(G^*)|=4$,  we have already seen that $N[G^*]=J-I$ is the  neighbourhood matrix of $C_4$.

\item If $|V(G^*)|=5$, say $V(G)=\{1,2,3,4,5\}$, w.l.o.g. some rows of $N[G^*]$ may be permuted in such a way that they correspond to the nodes listed on the table of the left-hand-side of Figure \ref{tablitas}. We have some freedom to choose the values of $\lambda_1, \lambda_2 \in \{0,1\}$. If $\lambda_1 =\lambda_2 =0$, then $N[G^*]$ is the neighbourhood matrix of $C_5$. If $\lambda_1\geq \lambda_2>0$ then $G^*$ has $C_4$ as a node induced subgraph (for instance $1-3-2-5-1$ when $\lambda_1=1$ and $\lambda_2=0$).

\item If $|V(G^*)|=6$, say $V(G)=\{1,2,3,4,5,6\}$, w.l.o.g. $N[G^*]$ may be indexed as in Figure \ref{tablitas}. Again, we have some freedom to choose the $0,1$ values of $\lambda_i$ for $i=1,\dots,6$.

If $\lambda_i=0$ for all $i$,  then $N[G^*]$ is the neighbourhood matrix of $C_6$.  If $\lambda_i=1$ for  $i\in \{1,2,3\}$ and $\lambda_i=0$ for  $i\in \{4,5,6\}$ (or $\lambda_i=0$ for  $i\in \{1,2,3\}$ and $\lambda_i=1$ for  $i\in \{4,5,6\}$) then $N[G^*]$ is the neighbourhood matrix of $S_3$ (Figure \ref{minimal}). If $\lambda_i=1$ for  all $i$,  then $N[G^*]$ is the neighbourhood matrix of the 3-pyramid also in Figure \ref{minimal}, which has  $C_4$  as a node induced subgraph.
It is easy to check that, for  the remaining possibilities for the $\lambda_i$'s and $i=1,\dots,6$, $G^*$ has $C_4$ or $C_5$ as a node induced subgraph (see Figure \ref{minimal}).
\end{itemize}

\begin{figure}[ht]
\centering
\begin{tabular}{c| c c c c c}
 & 1 & 2 & 3 & 4 & 5\\
\hline
2 & 0 & 1 & 1 & 0 & 1 \\
4 & 1 & 0 & 1 & 1 & $\lambda_2$ \\
5 & 1 & 1 & 0 & $\lambda_2$ & 1 \\
1 & 1 & 0 & $\lambda_1$ & 1 & 1 \\
3 & $\lambda_1$ & 1 & 1 & 1 & 0
\end{tabular}
\hspace{1.5cm} 
\begin{tabular}{c| c c c c c c c}
 & 1 & 2 & 3 & 4 & 5 & 6\\
\hline
4 & 0 & 1 & 1 & 1 & $\lambda_1$ & $\lambda_2$\\
5 & 1 & 0 & 1 & $\lambda_1$ & 1 & $\lambda_3$\\
6 & 1 & 1 & 0 & $\lambda_2$ & $\lambda_3$ & 1\\
1 & 1 & $\lambda_4$ & $\lambda_5$ & 0 & 1 & 1\\
2 & $\lambda_4$ & 1 & $\lambda_6$ & 1 & 0 & 1\\
3 & $\lambda_5$ & $\lambda_6$ & 1 & 1 & 1 & 0
\end{tabular}
\caption{$N[G^*]$ in the proof of Theorem \ref{main}.}\label{tablitas}
\end{figure}
\end{proof}

Recall that we need two conditions for a graph to belong to $\F$. We have explored the fact that $N[G]$ is an extended clique-node matrix. Let us now focus on the perfection of the clique graph associated with $G$.

\begin{remark}
We have already mentioned in Proposition \ref{webs} that the cycles $C_n$ for $n\geq 7$ do not belong to $\F$. In this case, this holds from the fact that the associated graph $G_Q=W^2_n$ is imperfect when $n>6$ due to Remark \ref{perfect_webs}. 
\end{remark}

\begin{proposition}\label{entusiasta}
If $G=(V,E)$ is such that
\begin{itemize}
\item there is $G'\subset G$ whose associated clique graph $G'_Q$ is imperfect (and thus $G'\notin \F$), 
\item for all $v\in V(G')$, $N_{G}(v)\subset N_{G}(w)$ for some $w \in V\setminus V(G')$  
\end{itemize}
then the clique graph $G_Q$ is imperfect, and then $G\notin \F$.
\end{proposition}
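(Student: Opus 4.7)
The plan is to exhibit an imperfect induced subgraph inside $G_Q$; combined with the characterization of $\F$ via Theorem \ref{ch}, this will deliver both conclusions of the proposition. The natural candidate is $G_Q[V(G')]$, the subgraph of $G_Q$ induced on the vertex set of $G'$. I aim to show that $G_Q[V(G')]=G'_Q$, after which the induced odd hole or odd anti-hole of $G'_Q$ persists as an induced subgraph of $G_Q$.

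The easy inclusion $E(G'_Q)\subseteq E(G_Q[V(G')])$ is immediate: any edge of $G'_Q$ arises from a row of $N[G']$ indexed by some $v\in V(G')$ whose closed $G'$-neighbourhood covers both endpoints, and since $G'$ is induced one has $N_{G'}[v]=N_G[v]\cap V(G')$, so the same row of $N[G]$ still witnesses the edge inside $G_Q[V(G')]$.

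The crucial step is the reverse inclusion. Suppose $u_i,u_j\in V(G')$ and the edge $u_iu_j$ of $G_Q$ is witnessed only by a row indexed by some $z\in V\setminus V(G')$; equivalently, $z$ is adjacent in $G$ to both $u_i$ and $u_j$. Applying the hypothesis to $u_i$ and to $u_j$ yields $w_i,w_j\in V\setminus V(G')$ with $N_G(u_i)\subset N_G(w_i)$ and $N_G(u_j)\subset N_G(w_j)$; in particular, $z$ is adjacent both to $w_i$ and to $w_j$. The plan is to combine these adjacencies with vertices in $N_G(w_i)\setminus N_G(u_i)$ and $N_G(w_j)\setminus N_G(u_j)$ furnished by the strict inclusion, and to exhibit in this way a common neighbour of $u_i$ and $u_j$ lying inside $V(G')$. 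Such an internal witness would provide a row of $N[G']$ with ones in the columns indexed by $u_i$ and $u_j$, forcing $u_iu_j\in E(G'_Q)$ and ruling out spurious edges.

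The main obstacle is precisely this last step: translating the external adjacencies $z\sim u_i,u_j,w_i,w_j$ into a common neighbour of $u_i$ and $u_j$ in $V(G')$ via a careful combinatorial analysis of the local configuration around these vertices, exploiting the strictness of the inclusion. Once this is accomplished, the equality $G_Q[V(G')]=G'_Q$ holds, so the odd hole or odd anti-hole of $G'_Q$ sits induced in $G_Q$, whence $G_Q$ is imperfect and $G\notin \F$.
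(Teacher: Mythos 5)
Your overall strategy---proving $G_Q[V(G')]=G'_Q$ and letting the odd hole or odd anti-hole of $G'_Q$ survive as an induced subgraph of $G_Q$---is exactly the paper's, and your easy inclusion $E(G'_Q)\subseteq E(G_Q[V(G')])$ is correct. But the argument is not complete: the reverse inclusion, which you yourself flag as ``the main obstacle,'' is the entire content of the proposition, and the route you sketch for it does not close. Applying the hypothesis to $u_i$ and $u_j$ hands you nodes $w_i,w_j$ \emph{outside} $V(G')$ whose neighbourhoods \emph{contain} those of $u_i,u_j$; this only piles up more external structure and gives no mechanism for manufacturing a common neighbour of $u_i$ and $u_j$ \emph{inside} $V(G')$. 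Nothing in the hypothesis, read literally, forbids an external node $z$ from being adjacent to two nodes of $G'$ that have no common closed neighbour within $G'$, so the equality $G_Q[V(G')]=G'_Q$ cannot be extracted along these lines.

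What is actually needed---and what the paper's one-line proof silently uses---is domination in the \emph{opposite} direction: for the external witness $z$ one wants some $w\in V(G')$ with $N_{G}(z)\cap V(G')\subseteq N_{G}(w)$; the paper's proof indeed begins ``let $v\in U=V\setminus V(G')$; by hypothesis $N_{G'}(v)\subset N_{G'}(w)$ for some $w\in V(G')$,'' which is the reversal of the condition printed in the statement. With that reversed condition the spurious case dies instantly: $u_i,u_j\in N_{G}(z)$ forces $u_i,u_j\in N_{G'}[w]$, and the row of $N[G']$ indexed by $w$ witnesses the edge $u_iu_j$ in $G'_Q$. So your attempt faithfully follows the hypothesis as printed and consequently gets stuck; to salvage it you should identify the mismatch and run the (one-line) verification under the reversed domination condition, namely that every node outside $G'$ has its trace on $V(G')$ dominated by the neighbourhood of a node of $G'$.
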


\begin{proof}
Let $U=V\setminus V(G')$ and $v\in U$. By hypothesis, $N_{G'}(v)\subset N_{G'}(w)$ for some $w \in V(G')$ and then the graph obtained from $G_Q$ after deleting the nodes in $U$ coincides with $G'_Q$. This shows that $G_Q$ is imperfect.  
\end{proof}

This result asks for some knowledge of subgraphs of $G$ not in $\F$ with  imperfect clique graph. We know that all cycles of length at least 7 satisfy this property. 
In fact, we can ask if the converse of this result is true, that is, is it enough to look for node induced cycles in the graph in order to have an  imperfect clique graph? 
The answer is negative as shown by the following infinite family of graphs.

\begin{example}\label{lastexample}
Let $G=(V,E)$ where $V=\{1,\dots,4k+2\}$ and $E$ is such that the even nodes in $V$ form a clique and $N(i)=\{i-1,i+1\}$ for every odd node $i\in V$.

In this case, the clique graph $G_Q$ is the union of an odd hole $C_{2k+1}$ and a complete graph $K_{2k+1}$, where also all the nodes in this complete graph are adjacent to every node in the cycle $C_{2k+1}$ (i.e., $G_Q$ is the complete join of the cycle and the complete graph). Clearly,  $G_Q$ is imperfect. 
Also, $G$ is a chordal graph, i.e., it has no cycle of length at least 4. 
\end{example}

\section{Conclusions}

In this paper we have analysed the relationship between the perfection of closed neighbourhood matrices and the $\{k\}$-Packing Function Problem in graphs.

We defined a family of graphs, $\F$, in which the $\{k\}$-Packing Function Problem can be solved in polynomial time. Then, this contribution  is  devoted to look  for a nice characterization of graphs in this family.

The definition of the family $\F$ and Chv\'atal's result ask for two conditions on the closed neighbourhood matrix of a graph in $\F$: the property of being a (extended) clique-node matrix and the perfection of its associated clique graph. 

Firstly, given a graph $G$, we have studied under which conditions the closed neighbourhood matrix $N[G]$ is an extended clique-node matrix and found a set of some small graphs which ---in case of being a subgraph of $G$ in the family--- need a universal node in $G$. 
We could completely characterized the  family of graphs that have extended clique-node matrices, although some of them have an imperfect clique graph.
   
Secondly, we provided some sufficient conditions for a graph not to be in the family  due to the imperfection of its associated  clique graph.

Based on the results in Proposition \ref{entusiasta} and Example \ref{lastexample}, it is clear that even if a graph satisfies the property of having an extended clique-node closed neighbourhood matrix,  it is important to study in more depth the imperfection of the associated clique graph  and, if possible, identify some minimal forbidden subgraphs for $G$.

In Figure \ref{diagrama} we have represented the relationship between the well-known graph classes of chordal and strongly chordal graphs and the family $\F$ introduced in this work. Also, we have depicted some other examples of graphs mentioned in this work (like web graphs, cycles, wheels and the graph $S_3$) in order to clarify our findings.

\begin{figure}[ht]
\centering
\includegraphics[scale=1.8]{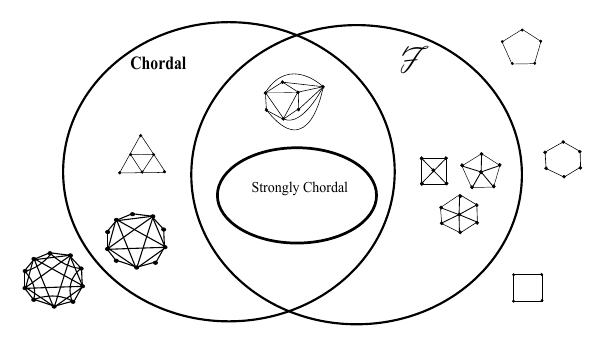}
\caption{}\label{diagrama}
\end{figure}

\end{document}